
\documentclass[a4paper]{amsart}

\usepackage[british,english]{babel} 
\usepackage{graphics,color,pgf}
\usepackage{epsfig}
\usepackage[ansinew]{inputenc}
\usepackage[all]{xy}
\usepackage{hyperref}
\usepackage{amssymb, amsmath,amsthm, mathtools, amscd}
\usepackage{mathrsfs}
\usepackage{stmaryrd}
\usepackage{cancel} 
\usepackage{tikz}
\usepackage{tikz-cd}
\usetikzlibrary{matrix}

\theoremstyle{plain}
 \newtheorem{thm}{Theorem}
 \newtheorem{prop}[thm]{Proposition}
 
 \newtheorem{cor}[thm]{Corollary}
 \newtheorem*{op*}{Open Problem} 
 \newtheorem*{frthmA*}{Theorem A} 
 \newtheorem*{frcorB*}{Corollary B} 
  \newtheorem*{frthmAp*}{Theorem A'} 
   
\theoremstyle{definition}
 \newtheorem{exm}[thm]{Example}

\theoremstyle{remark}
 \newtheorem*{rem}{Remark}



\DeclareMathOperator{\Ker}{Ker}
\DeclareMathOperator{\Img}{Im}


\DeclareMathOperator{\alt}{\mathrm{alt}}
\DeclareMathOperator{\nalt}{\mathrm{n-alt}}


\renewcommand{\geq}{\geqslant}

\setlength{\textwidth}{28cc} \setlength{\textheight}{50cc}

\title[Alternating cochains on Furstenberg boundaries]{Alternating cochains on Furstenberg boundaries and measurable cohomology}

\author[Michelle Bucher]{
Michelle Bucher} 
\address{Universit\'e de Gen\`eve}
\email{Michelle.Bucher-Karlsson@unige.ch}

\author[Alessio Savini]{
Alessio Savini}
\address{Politecnico di Milano} 
\email{alessio.savini@polimi.it}

\thanks{Supported by the Swiss National Science Foundation. 
\\
\indent Mathematics Subject Classification 2020: Primary 22E41, Secondary 57T10.
} 

\begin{document}

\begin{abstract}
In \cite{Monod}, Nicolas Monod showed that the evaluation map 
$$H^*_m(G\curvearrowright G/P)\longrightarrow H^*_m(G)$$
between the measurable cohomology of the action of a connected semisimple Lie group $G$ on its Furstenberg boundary $G/P$ and the measurable cohomology of $G$ is surjective with a non-trivial kernel in all degrees below a constant depending on $G$ and less than or equal to the rank of $G$ plus $2$. When we were looking for explicit representatives of classes in this kernel \cite{BucSav}, we were astonished to discover that some of these nontrivial classes have trivial alternation. In this paper, we refine Monod's result by identifying the non-alternating and alternating cohomology classes in this kernel. As a consequence, we show that $H^*_m(G)$ is isomorphic to the alternating measurable cohomology of $G$ acting on $G/P$ in all even degrees
$$H^{2k}_{m,\alt}(G\curvearrowright G/P)\cong H^{2k}_m(G),$$
for a majority of Lie groups, namely those for which the longest element of the Weyl group acts as $-1$ on the Lie algebra of a maximal split torus $A$ in $G$. 
\end{abstract}

\maketitle

\section{Introduction}

Let $G$ be a connected semisimple Lie group with finite center. The \emph{measurable cohomology} $ H^*_m(G)$ of $G$ is defined as the cohomology of the cocomplex
$$L^0(G^{*+1},\mathbb{R})^G$$
endowed with its usual homogeneous differential. Observe that it is actually isomorphic to the continuous cohomology of $G$ by Austin and Moore \cite[Theorem A]{AM}. Given a minimal parabolic subgroup $P<G$, one can similarly consider the measurable cohomology of the action $H^*_m(G\curvearrowright G/P)$ as the cohomology of the cocomplex
$$L^0((G/P)^{*+1},\mathbb{R})^G$$
also endowed with its homogeneous differential. Notice that here we are using the usual quasi-invariant measure class on the quotient $G/P$. The evaluation on a base point induces a map
\begin{equation*}
ev:H^*_m(G\curvearrowright G/P)\longrightarrow H^*_m(G)\end{equation*}
which is easily shown not to depend on the base point. Monod remarkably showed that this evaluation map is surjective. Furthermore, he gives a precise description of the kernel in terms of the invariant cohomology of a maximal split torus $A<P$. More precisely, if $w_0$ is a representative of the longest element in the Weyl group associated to $A$, then $w_0$ acts on $H^\ast_m(A)$ via the adjoint representation and we define the $w_0$-invariant cohomology $H^\ast_m(A)^{w_0}$ as those classes fixed by $\mathrm{Ad}(w_0)$. 

\begin{thm}\cite[Theorem B]{Monod}\label{Thm Monod}\label{thm monod} Let $G$ be a connected semisimple Lie group with finite center. Let $P$ be a minimal parabolic subgroup, $A < P$ a maximal split torus and $w_0$ a representative of the longest element in the Weyl group associated to $A$. The evaluation map 
$$ H^*_m(G\curvearrowright G/P)\longrightarrow H^*_m(G)$$
is surjective and its kernel 
$$NH^*_m(G\curvearrowright G/P):= \mathrm{Ker}(H^*_m(G\curvearrowright G/P)\longrightarrow H^*_m(G))$$
fits into an exact sequence
\begin{equation}\label{SES}0 \longrightarrow H_m^{p-2}(A)^{w_0} \longrightarrow NH^p_m(G\curvearrowright G/P) \longrightarrow  H_m^{p-1}(A)^{w_0} \longrightarrow 0,\end{equation}
for $p\geq 2$. \end{thm}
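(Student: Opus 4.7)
The plan is to analyze the evaluation map at the level of the cochain complexes by exploiting the homogeneous structure of $G/P$. A first reduction is a Shapiro-type shuffling: since the $G$-stabilizer of the basepoint $eP$ is $P$, one has
\[ L^0((G/P)^{n+1})^G \cong L^0((G/P)^n)^P, \]
realized by evaluating the last coordinate at $eP$. The evaluation map $ev$ is then realized at the chain level by pullback along $G \to G/P$, composed with the analogous identification $L^0(G^{n+1})^G \cong L^0(G^n)$ coming from the free diagonal action.

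Next, I would stratify $(G/P)^n$ by the Bruhat decomposition $G = \bigsqcup_{w \in W} PwP$: measure-theoretically, the $P$-orbits are indexed by sequences in $W$ recording the relative Bruhat positions. A generic tuple on the top stratum has $P$-stabilizer reducing to $MA$, modulo the unipotent radical $N$; since $M$ is compact it averages away in measurable cohomology, while $N$ acts simply transitively on the big cell and disappears from the invariants. The quotient complex on the open stratum is therefore modeled on $A$. The longest Weyl element $w_0$ enters as the symmetry swapping two entries of a pair in the open cell: this interchange corresponds to conjugation by $w_0$, which explains the $w_0$-invariance appearing in \eqref{SES}.

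Third, I would filter $L^0((G/P)^{*+1})^G$ by the codimension of the Bruhat stratum (or equivalently by the length of the relevant Weyl words) and run the associated spectral sequence. The open-stratum column reproduces $L^0(G^*)^G$, and under $ev$ this column is hit by identity; this gives both surjectivity of $ev$ and the fact that $NH^*_m(G \curvearrowright G/P)$ is supported on strictly lower strata. Among those, the two codimension-one boundary columns are each modeled on a torus computation and should contribute, in total degree $p$, the groups $H^{p-1}_m(A)^{w_0}$ and $H^{p-2}_m(A)^{w_0}$ respectively, the shift by one coming from the extra coordinate consumed in recording which cell is degenerate. Assembling the associated graded pieces yields the two-step filtration recorded by \eqref{SES}.

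The main obstacle is precisely this last point: to turn the spectral sequence into the claimed clean two-term exact sequence, one must prove that all deeper strata contribute nothing to $E_\infty$, and that the only surviving differential between the two codimension-one columns is the zero map so as to produce a genuine short exact sequence rather than a longer complex. The vanishing of deeper strata should follow from a van Est style argument applied to the Langlands decomposition of the sub-parabolics that arise as stabilizers, iteratively reducing to tori of lower rank. Identifying the surviving extension with the one in \eqref{SES} then amounts to a careful Weyl-combinatorial bookkeeping — the step where the precise action of $w_0$ on the torus cohomology, and not merely its presence, becomes essential.
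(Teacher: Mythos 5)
Your proposal correctly isolates several ingredients that do appear in the argument the paper relies on (Monod's proof, whose structure is recalled in Sections 3 and 4): the stabilizer $MA$ of a generic pair in $(G/P)^2$, the compactness of $M$ allowing the reduction to $A$, and the fact that swapping the two points of a generic pair corresponds to conjugation by $w_0$, which is exactly how $\Pi^p_{w_0}=\mathrm{id}-\mathrm{Ad}(w_0)$ and hence the $w_0$-invariants enter. However, the central mechanism you propose --- filtering $L^0((G/P)^{*+1})^G$ by the codimension of the Bruhat strata and running the associated spectral sequence --- is not available in the measurable category. Every stratum of positive codimension is a null set for the quasi-invariant measure class, so an $L^0$-cochain neither restricts to such a stratum nor can be supported on it: the filtration you describe collapses to the single open stratum, and your ``two codimension-one boundary columns'' carry no information. (For the same reason, your heuristic that the open-stratum column is hit by the identity under $ev$ would wrongly predict that the kernel vanishes.) A second, independent problem is that for three or more points the orbits in $(G/P)^{n}$ are not indexed by Weyl-group data: already for $\mathrm{SL}(3,\mathbb{R})$ the generic triples of flags form a positive-dimensional family of orbits, so the strata are not homogeneous spaces and no van Est/Langlands reduction of their stabilizers to sub-tori applies.

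The actual source of the two torus contributions is different. One forms the double complex $C^{p,q}=L^0(G^{p+1},L^0((G/P)^q))^G$; the first spectral sequence degenerates because the rows are exact, and in the second spectral sequence the columns are computed by Eckmann--Shapiro induction from the \emph{coefficient modules}: $q=0$ gives $H^p_m(G)$, $q=1$ gives $H^p_m(P)\cong H^p_m(A)$, $q=2$ gives $H^p_m(MA)\cong H^p_m(A)$ via the open orbit $G/MA\subset (G/P)^2$, and all columns with $q\geq 3$ vanish for $p\geq 1$ (Monod's Proposition 5.1 --- this is the statement your ``deeper strata contribute nothing'' would have to replace, and it is not a stratum-by-stratum assertion). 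The degree shifts by $1$ and $2$ in \eqref{SES} are the column indices $q=1,2$, not codimensions of strata; the differential from the $q=0$ column is the restriction $H^p_m(G)\to H^p_m(P)$, which vanishes by Wienhard's theorem, and the differential from the $q=1$ column is $\Pi^p_{w_0}$ up to sign. Convergence of the spectral sequence to zero then yields both the surjectivity of $ev$ and the two-step filtration of the kernel. So the Weyl-theoretic inputs you identified are the right ones, but they must be fed into the coefficient resolution $\mathbb{R}\to L^0(G/P)\to L^0((G/P)^2)\to\cdots$ rather than into a geometric stratification of the simplices themselves.
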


Observe that Theorem \ref{thm monod} is not formulated correctly in \cite{Monod}. Indeed, it is stated there under the hidden  assumption that $w_0$ acts as $-1$ on $\mathfrak{a}$ (which is often true, but wrong for $\mathrm{SL}(n,\mathbb{R})$ when $n\geq 3$, for example). While the statement takes a somehow different form, the adaptation to a general $w_0$-action has essentially no effect on Monod's proof. 

An immediate corollary of Theorem \ref{thm monod}  is that $H^p_m(G\curvearrowright G/P)\cong H^p_m(G)$ for $p$ strictly greater than  the rank of $G$ plus $2$. We will see in Corollary \ref{cor w0 -1} a refinement of this isomorphism, when considering alternating cocycles, valid in all even degrees for the majority of semisimple Lie groups.


The symmetric group $\mathrm{Sym}(p+1)$ naturally acts on $(G/P)^{p+1}$ and this action commutes with the diagonal action of $G$ on $(G/P)^{p+1}$. As a consequence we can define an \emph{alternation} function
$$\mathrm{Alt}_{p+1}: L^0((G/P)^{p+1},\mathbb{R})^G\longrightarrow L^0((G/P)^{p+1},\mathbb{R})^G$$
by
$$\mathrm{Alt}_{p+1}(f)(x_0,\dots,x_p)=\frac{1}{(p+1)!} \sum_{\sigma\in \mathrm{Sym}(p+1)} \mathrm{sign}(\sigma) f (x_{\sigma(0)},\dots,x_{\sigma(p)}).$$
We define the \emph{non-alternating}, respectively \emph{alternating}, cochains by 
$$L^0_{\nalt}((G/P)^{p+1})^G:=\mathrm{Ker}(\mathrm{Alt}_{p+1}), \quad \mathrm{respectively} \quad L^0_{\alt}((G/P)^{p+1})^G:=\mathrm{Im}(\mathrm{Alt}_{p+1}).$$
Since $\mathrm{Alt}_{p+1}$ is idempotent, there is  a direct sum decomposition
$$L^0((G/P)^{p+1})=L^0_{\nalt}((G/P)^{p+1})\oplus L^0_{\alt}((G/P)^{p+1}).$$
Furthermore, a  measurable cochain $f$ on $(G/P)^{p+1}$ is alternating if and only if it satisfies 
$$
f(x_{\sigma(0)},\ldots,x_{\sigma(p)})=\mathrm{sign}(\sigma)f(x_0,\ldots,x_p) . 
$$
for any permutation $\sigma \in \mathrm{Sym}(p+1)$.

It is straightforward to check that the alternation commutes with the homogeneous coboundary operator and thus restricts to both the non-alternating and alternating cochains. Denoting by $H^p_{m,\nalt}(G \curvearrowright G/P)$ and $H^{p}_{m,\alt}(G \curvearrowright G/P)$ their respective cohomologies, we immediately obtain the direct sum decomposition
$$
H^*_m(G\curvearrowright G/P)=H^*_{m,\nalt}(G \curvearrowright G/P)\oplus H^{*}_{m,\alt}(G \curvearrowright G/P),
$$
which restricts to a direct decomposition of the kernel
\begin{equation}\label{eq:decomposition:kernel:boundary}
NH^*_m(G\curvearrowright G/P)\cong NH^*_{m,\nalt}(G\curvearrowright G/P)\oplus NH^*_{m,\alt}(G\curvearrowright G/P).
\end{equation}

In the case of groups, the alternation map is homotopic to the identity, thus we have that $H^*_{m,\alt}(G)=H^*_m(G)$. One could expect that the same holds also for the cohomology of the boundary, but this is not the case. The problem here is that the homotopy between the alternation and the identity involves evaluating a cochain on tuples of nondistinct points (namely the prism between a simplex and its alternation), which is a set of measure zero, so that the evaluation makes no sense for $L^0$-cochains. The fact that $H^*_{m,\alt}(G,\mathbb{R})=H^*_m(G,\mathbb{R})$ implies immediately that 
$$H^*_{m,\nalt}(G\curvearrowright G)=NH^*_{m,\nalt}(G\curvearrowright G)$$
is included in the kernel. What is more surprising is that the decomposition \eqref{eq:decomposition:kernel:boundary} precisely corresponds to the short exact sequence (\ref{SES}). Indeed, if we compute the cohomology of the cocomplex of non-alternating measurable functions we get back the $w_0$-invariant cohomology of the maximal split torus $A$ with a degree shifted by $2$, which is the kernel of the short exact sequence (\ref{SES}): 


\begin{thm}\label{thm negative complex}
Let $G$  be a connected semisimple Lie group with finite center. Let $P$ be a minimal parabolic subgroup, $A < P$ a maximal split torus and $w_0$ a representative of the longest element in the Weyl group associated to $A$. Then
\begin{itemize}
\item $H^p_{m,\nalt}(G \curvearrowright G/P) \cong 0$ for $p=0,1,2$;
\item $H^{p}_{m,\nalt}(G \curvearrowright G/P) \cong H^{p-2}_m(A)^{w_0}$ for $p \geq 3$. 
\end{itemize}
\end{thm}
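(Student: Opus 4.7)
The strategy combines three ingredients. First, a general inclusion
$$H^*_{m,\nalt}(G \curvearrowright G/P) \subseteq NH^*_m(G \curvearrowright G/P)$$
arises because the evaluation map commutes with $\mathrm{Alt}_{p+1}$ (a direct check from the formulas) and $H^*_{m,\nalt}(G) = 0$ (the alternation on groups is chain-homotopic to the identity, as noted in the introduction). Second, for the low degrees $p = 0, 1, 2$, the plan is to argue directly: for $p = 0$ the statement is trivial; for $p = 1$ the $G$-invariant non-alternating (symmetric) $L^0$-cochains on $(G/P)^2$ reduce to constants, since $G$ is transitive on generic pairs with stabilizer $L = MA$, and the coboundary of a nonzero constant is a nonzero non-alternating constant on $(G/P)^3$, so $H^1_{m,\nalt} = 0$; for $p = 2$ a similar but more involved argument uses the generic $G$-orbit structure on $(G/P)^3$ to show that all non-alternating invariant cocycles arise as coboundaries of non-alternating $1$-cochains.

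Third, for $p \geq 3$, I would adapt Monod's proof of Theorem \ref{thm monod}, which rests on the Shapiro-type isomorphism
$$L^0((G/P)^{p+1})^G \cong L^0((G/P)^{p-1})^L,$$
obtained by normalizing the first two coordinates of a generic tuple to the standard transverse pair $(P, w_0 P)$, whose $G$-stabilizer is the Levi factor $L = MA$. Under this identification the swap of the first two coordinates corresponds to the $w_0$-action on the right-hand side, and a direct calculation (pairing each $\sigma$ with $\sigma(01)$ in the alternation sum) shows that any cochain symmetric in its first two coordinates lies automatically in the non-alternating subcomplex. Monod's short exact sequence (\ref{SES}) arises from a two-term filtration of the Shapiro-reduced complex organized by the $w_0$-eigenspaces, with the kernel $H^{p-2}_m(A)^{w_0}$ corresponding to the $w_0$-symmetric part. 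Tracking the alternation through this filtration should then identify the non-alternating cohomology with $H^{p-2}_m(A)^{w_0}$.

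The hardest part will be completing this matching. One must decompose the full $\mathrm{Sym}(p+1)$-action through its subgroup $\mathrm{Sym}(2) \times \mathrm{Sym}(p-1)$ stabilizing the partition $\{0,1\} \sqcup \{2,\dots,p\}$, and verify that the non-alternating projection coincides with the $w_0$-symmetric part of the Shapiro reduction. In particular, one needs to check that $(01)$-antisymmetric non-alternating cochains contribute no cohomology beyond the $H^{p-2}_m(A)^{w_0}$ summand, and to understand why the analogous statement fails in degree $2$, where the image of $H^0_m(A)^{w_0}$ in $NH^2_m$ lands in the alternating rather than the non-alternating part (as forced by the required vanishing $H^2_{m,\nalt} = 0$). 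This bookkeeping, which is more delicate than it first appears, is the bulk of the work.
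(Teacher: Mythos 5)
Your overall plan is sound in outline, but it diverges from the paper's argument and has a genuine gap exactly at the step you defer to ``bookkeeping''. The paper does not work with the single complex $L^0((G/P)^{*+1})^G$ and a Shapiro reduction on the first two boundary coordinates; it splits Monod's bicomplex $C^{p,q}=L^0(G^{p+1},L^0((G/P)^q))^G$ into its $\nalt$ and $\alt$ summands and runs the two spectral sequences of the $\nalt$ piece. The key computation is then only about the coefficient alternation $\mathrm{Alt}_q$: for $q\le 1$ the non-alternating coefficients vanish, for $q\ge 3$ and $p\ge 1$ the whole column vanishes by Monod's Proposition 5.1, and for $q=2$ the Eckmann--Shapiro isomorphism $H^p_m(G,L^0((G/P)^2))\cong H^p_m(A)$ intertwines $2\mathrm{Alt}_2$ with $\Pi^p_{w_0}=\id-\mathrm{Ad}(w_0)$, so the non-alternating part of that column is exactly $H^p_m(A)^{w_0}$. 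Since the first spectral sequence degenerates, the second abuts to zero, and the theorem is read off the second page. Your route instead needs the full alternation $\mathrm{Alt}_{p+1}$ on $(p+1)$ boundary points to interact with a $(01)$-normalization. Your pairing argument only shows that $(01)$-symmetric cochains are non-alternating; the kernel of $\mathrm{Alt}_{p+1}$ is the complement of the entire sign-isotypic component of $\mathrm{Sym}(p+1)$ and contains a large $(01)$-antisymmetric part, and neither the $(01)$-decomposition nor your proposed filtration is preserved by the homogeneous coboundary (faces omitting the normalized coordinates are not visible after the reduction $L^0((G/P)^{p+1})^G\cong L^0((G/P)^{p-1})^{MA}$, which is precisely why Monod passes to a bicomplex). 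Showing that the $(01)$-antisymmetric non-alternating cochains contribute nothing is not bookkeeping: it is the entire content of the theorem, and no mechanism for it is given.

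The case $p=2$ is also not established by your sketch. For higher-rank $G$ the action on generic triples in $(G/P)^3$ is far from transitive (triples of flags have moduli), so $L^0((G/P)^3)^G$ is large and ``all non-alternating invariant cocycles are coboundaries of constants'' is a substantive claim; indeed your own closing remark, that you cannot see why the image of $H^0_m(A)^{w_0}$ in $NH^2_m$ lands in the alternating part, signals that the approach does not close in this degree. In the paper this vanishing is not proved directly: the entry $^{II}_{\nalt}E_2^{0,3}\cong H^2_{m,\nalt}(G\curvearrowright G/P)$ receives and emits no differentials on any page (its potential sources lie in the vanishing columns $q=0,1$), so it survives to $E_\infty$ and must be zero because the spectral sequence converges to zero. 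Your first ingredient (the inclusion of $H^*_{m,\nalt}$ in the kernel of evaluation) and your degree $0$ and $1$ computations are correct but are not where the difficulty lies.
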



In a similar way, we compute explicitly the cohomology of the alternating cocomplex. 

\begin{thm}\label{thm positive complex}
Let $G$ be a connected semisimple Lie group with finite center. Let $P$ be a minimal parabolic subgroup, $A < P$ a maximal split torus and $w_0$ a representative of the longest element in the Weyl group associated to $A$. There is a short exact sequence 
\begin{equation}\label{eq ses positive boundary}
\xymatrix{
0 \ar[r] & H^{p-1}_m(A)^{w_0} \ar[r] & H^p_{m,\alt}(G \curvearrowright G/P)\ar[r] & H^p_m(G) \ar[r] & 0,
}
\end{equation}
for $p \geq 2$. 
\end{thm}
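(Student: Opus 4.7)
The plan is to derive Theorem \ref{thm positive complex} by combining Monod's Theorem \ref{thm monod}, Theorem \ref{thm negative complex}, and the alternating/non-alternating splitting \eqref{eq:decomposition:kernel:boundary}. First I will construct the short exact sequence \eqref{eq ses positive boundary} by restricting the evaluation map to alternating cochains, then I will identify the resulting kernel with $H^{p-1}_m(A)^{w_0}$ by means of Monod's sequence and a dimension count.

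\textbf{Restricting the evaluation to the alternating complex.} The symmetric group action on $(G/P)^{p+1}$ is compatible under $ev$ with the symmetric group action on $G^{p+1}$, so the evaluation map respects the alternating/non-alternating splitting. On $G^{p+1}$ the usual prism homotopy makes sense and exhibits the alternation as homotopic to the identity (as recalled in the introduction), giving $H^p_m(G)=H^p_{m,\alt}(G)$ and vanishing non-alternating cohomology. Combined with the surjectivity of $ev$ from Theorem \ref{thm monod}, this shows that the restriction of the evaluation map to alternating classes,
\[
H^p_{m,\alt}(G\curvearrowright G/P)\longrightarrow H^p_m(G),
\]
is already surjective, with kernel equal to $NH^p_{m,\alt}(G\curvearrowright G/P)$. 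Writing this out gives the short exact sequence
\[
0\longrightarrow NH^p_{m,\alt}(G\curvearrowright G/P)\longrightarrow H^p_{m,\alt}(G\curvearrowright G/P)\longrightarrow H^p_m(G)\longrightarrow 0.
\]

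\textbf{Identifying the kernel.} It remains to prove that $NH^p_{m,\alt}(G\curvearrowright G/P)\cong H^{p-1}_m(A)^{w_0}$. For this I use three ingredients: the direct sum splitting $NH^p_m=NH^p_{m,\nalt}\oplus NH^p_{m,\alt}$ coming from \eqref{eq:decomposition:kernel:boundary}; the identification $NH^p_{m,\nalt}\cong H^{p-2}_m(A)^{w_0}$ from Theorem \ref{thm negative complex}; and Monod's short exact sequence \eqref{SES}, which yields $\dim NH^p_m=\dim H^{p-2}_m(A)^{w_0}+\dim H^{p-1}_m(A)^{w_0}$. Because $H^k_m(A)\cong\Lambda^k\mathfrak{a}^*$ is finite-dimensional in each degree, subtracting $\dim NH^p_{m,\nalt}=\dim H^{p-2}_m(A)^{w_0}$ forces $\dim NH^p_{m,\alt}=\dim H^{p-1}_m(A)^{w_0}$, hence an abstract isomorphism that slots into the short exact sequence displayed above to produce \eqref{eq ses positive boundary}.

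\textbf{Expected obstacle.} A mere dimension count suffices for the existence of the sequence as stated, but one would really like the isomorphism $NH^p_{m,\alt}\cong H^{p-1}_m(A)^{w_0}$ to be canonical, i.e.\ compatible with Monod's sequence \eqref{SES} in the sense that the injection $H^{p-2}_m(A)^{w_0}\hookrightarrow NH^p_m$ constructed in his proof lands precisely in the non-alternating summand, so that passage to the quotient gives the alternating summand naturally. Verifying this amounts to revisiting the cochain-level maps realising \eqref{SES} and checking that the representatives of classes in $H^{p-2}_m(A)^{w_0}$ produced there can be arranged to be non-alternating as functions on $(G/P)^{p+1}$. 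This bookkeeping, rather than the overall strategy, is where I expect the actual work to lie; once it is in place, both Theorem \ref{thm negative complex} and Theorem \ref{thm positive complex} appear as the two pieces into which Monod's \eqref{SES} splits under the alt/nalt decomposition.
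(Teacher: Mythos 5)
Your overall strategy --- restrict the evaluation map to the alternating summand and then identify its kernel by subtracting Theorem \ref{thm negative complex} from Monod's sequence --- is genuinely different from the proof in the paper, which never invokes Theorem \ref{thm monod} or Theorem \ref{thm negative complex} but instead computes the second spectral sequence of the alternating bicomplex $C^{p,q}_{\alt}$ directly; there the two maps in \eqref{eq ses positive boundary} arise canonically as page differentials, and Monod's sequence is recovered (rather than assumed) by adding Theorems \ref{thm negative complex} and \ref{thm positive complex}. Your first step is correct: the pullback along $G^{p+1}\to(G/P)^{p+1}$ commutes with the permutation actions, $H^*_{m,\nalt}(G)=0$, and hence the restriction of $ev$ to $H^p_{m,\alt}(G\curvearrowright G/P)$ is surjective with kernel $NH^p_{m,\alt}(G\curvearrowright G/P)$.

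The genuine gap is in the dimension count at $p=2$. Theorem \ref{thm negative complex} gives $NH^2_{m,\nalt}(G\curvearrowright G/P)=H^2_{m,\nalt}(G\curvearrowright G/P)=0$, while the sequence \eqref{SES} read literally at $p=2$ gives $\dim NH^2_m=\dim H^0_m(A)^{w_0}+\dim H^1_m(A)^{w_0}=1+\dim H^1_m(A)^{w_0}$; your subtraction therefore yields $\dim NH^2_{m,\alt}=1+\dim H^1_m(A)^{w_0}$, contradicting the very statement you are proving. The culprit is that the left-hand term of \eqref{SES} is only correct for $p\geq 3$: at $p=2$ there is no $H^0_m(A)^{w_0}$ contribution and $NH^2_m\cong H^1_m(A)^{w_0}$. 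This is already visible in the paper's example in Section \ref{Section: examples}, where $NH^2_m=0$ although $H^0_m(A)^{w_0}=\mathbb{R}$, and can be checked by hand for $\mathrm{PSL}(2,\mathbb{R})$ acting on $P^1\mathbb{R}$, where $L^0((P^1\mathbb{R})^3)^G\cong\mathbb{R}^2$, the constants are coboundaries, and $H^2_m(G\curvearrowright P^1\mathbb{R})$ is one-dimensional, generated by the orientation cocycle, which evaluates to the Euler class --- so $NH^2_m=0$ there. Hence you cannot feed \eqref{SES} into the count as a black box in the critical degree $p=2$; an independent computation is needed precisely there, and that is what the alternating spectral sequence supplies. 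For $p\geq 3$ your count is consistent and does yield the short exact sequence, but, as you yourself anticipate, only with a non-canonical identification of the kernel; making it natural would require reworking Monod's cochain-level construction, which amounts to redoing the spectral-sequence argument the paper actually carries out.
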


Observe that for $p=0,1$, we have $H^0_{m,\alt}(G \curvearrowright G/P)\cong \mathbb{R}$ and $H^1_{m,\alt}(G \curvearrowright G/P)=0$ by $2$-transitivity on generic points in $G/P$. In particular $H^p_{m,\alt}(G \curvearrowright G/P)\cong H^p_m(G)$ for $p=0,1$. 


A direct consequence of our Theorem \ref{thm positive complex}, is that for a large class of groups, we can actually compute the measurable cohomology of $G$ using alternating cochains on the associated Furstenberg boundary $G/P$ in even degree. Indeed, if $w_0$ acts as $-1$ on the Lie algebra of the maximal split torus $A$, it is immediate that 
$$
H^k_m(A)^{w_0}=
\begin{cases*}
H^k_m(A),& \ \ \textup{if $k$ is even}, \\
0,& \ \ \textup{if $k$ is odd}. 
\end{cases*}
$$
As a consequence, the left term of the short exact sequence  \eqref{eq ses positive boundary} vanishes when $p=k+1$ is even and we immediately obtain:

\begin{cor}\label{cor w0 -1}
Let $G$ be a connected semisimple Lie group with finite center. Let $P$ be a minimal parabolic subgroup, $A < P$ a maximal split torus and $w_0$ a representative of the longest element in the Weyl group associated to $A$. If $w_0$ acts as $-1$ on the Lie algebra of $A$, then for every $k \geq 0$ we have that 
$$
H^{2k}_m(G) \cong H^{2k}_{m,\mathrm{alt}}(G \curvearrowright G/P).
$$
\end{cor}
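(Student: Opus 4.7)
The plan is to deduce the corollary directly from Theorem \ref{thm positive complex} once we have computed the action of $w_0$ on $H^\ast_m(A)$. Since $A$ is a connected abelian Lie group isomorphic to $\mathbb{R}^r$ (where $r$ is the rank of $G$), I would first invoke the standard identification of its measurable (equivalently continuous) cohomology with the exterior algebra on the dual Lie algebra, namely
\begin{equation*}
H^k_m(A)\cong \Lambda^k(\mathfrak{a}^\ast),
\end{equation*}
with $w_0$ acting as the exterior power of its adjoint action on $\mathfrak{a}$. Under the hypothesis $\mathrm{Ad}(w_0)=-\mathrm{id}_{\mathfrak{a}}$, this action becomes multiplication by $(-1)^k$ on $\Lambda^k(\mathfrak{a}^\ast)$, giving the stated formula
\begin{equation*}
H^k_m(A)^{w_0}=\begin{cases*} H^k_m(A), & if $k$ is even,\\ 0, & if $k$ is odd. \end{cases*}
\end{equation*}

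Next, I would specialise the short exact sequence \eqref{eq ses positive boundary} to $p=2k$ with $k\geq 1$. The left-hand term is $H^{2k-1}_m(A)^{w_0}$, which vanishes by the previous step since $2k-1$ is odd. Exactness then collapses the sequence to the isomorphism
\begin{equation*}
H^{2k}_{m,\alt}(G\curvearrowright G/P)\xrightarrow{\;\cong\;} H^{2k}_m(G).
\end{equation*}

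Finally I would dispose of the remaining case $k=0$ using the remark immediately following Theorem \ref{thm positive complex}: the $2$-transitivity of $G$ on generic pairs of points in $G/P$ yields $H^0_{m,\alt}(G\curvearrowright G/P)\cong \mathbb{R}\cong H^0_m(G)$, so the isomorphism holds in degree $0$ as well.

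There is essentially no obstacle here, as all the serious work is packaged in Theorem \ref{thm positive complex}; the corollary is a parity observation. The only point that deserves a line of verification is the identification $H^\ast_m(A)\cong \Lambda^\ast(\mathfrak{a}^\ast)$ together with the compatibility of the $w_0$-action on measurable cohomology with the exterior algebra action induced by $\mathrm{Ad}(w_0)$, but both are standard and can be cited rather than reproved.
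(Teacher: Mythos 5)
Your argument is exactly the paper's: the parity computation of $H^k_m(A)^{w_0}$ (which the paper states as immediate and you justify via $H^\ast_m(A)\cong\Lambda^\ast(\mathfrak{a}^\ast)$), followed by the vanishing of the left-hand term of the short exact sequence \eqref{eq ses positive boundary} in even degree $p=2k$, with the case $k=0$ handled by the remark after Theorem \ref{thm positive complex}. The proposal is correct and takes essentially the same route.
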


\begin{rem}
Recall that $w_0$ acts as $-1$ for the groups of type $B_n,C_n,E_7,E_8,G_2,D_{2n}$ \cite[Plates I-IX]{BouVI}, and hence also for their products. So our statement is true for all semisimple Lie groups with no factor of type $A_n,D_{2n+1},E_6$.
\end{rem}

Note that Theorem \ref{thm positive complex} also shows that Corollary \ref{cor w0 -1} has no chance to hold in general, that is in degree $p$ whenever $H^{p-1}_m(A)^{w_0}$ is not trivial. 

Explicit examples of both non-alternating and alternating cocycles on $G/P$ representing non-trivial classes in the kernel of the evaluation map are presented in  \cite{BucSav} in low degrees for products of isometry groups of hyperbolic spaces and for $\mathrm{SL}(3,\mathbb{K})$, where $\mathbb{K}=\mathbb{R},\mathbb{C}$. As an illustration of  our Theorems \ref{thm negative complex} and \ref{thm positive complex} we reproduce in Section \ref{Section: examples} the two simplest such examples, namely an alternating  cocycle in degree $3$ and  a non-alternating in degree $4$ giving rise to nontrivial cohomology classes in $NH^*_m(G\curvearrowright G/P)$ for a product of two groups of isometries of real hyperbolic spaces.  

The proofs of Theorems \ref{thm negative complex} and  \ref{thm positive complex} are a refinement of Monod's proof of Theorem \ref{thm monod}. Indeed, we decompose the spectral sequences associated to the bicomplex considered by Monod into the direct sum of its non-alternated and alternated subspaces. This leads to two bicomplexes and their associated spectral sequences naturally lead to the strengthening of Monod's Theorem  \ref{thm monod} given by our Theorems \ref{thm negative complex} and  \ref{thm positive complex}.  



\subsection*{Acknowledgements} We are grateful to Nicolas Monod for several useful discussions in the preparation of this paper.

\section{Explicit non-alternating and alternating classes in $NH^*_m(G\curvearrowright G/P) $}\label{section examples}\label{Section: examples} 
As an illustration, we present here two nontrivial classes in $NH^*_m(G\curvearrowright G/P)$ that we found in \cite{BucSav} navigating through Monod' spectral sequence. The cocycle in degree $3$ is obviously alternating and represents a nontrivial class in the kernel $NH^3_m(G\curvearrowright G/P)$. The one in degree $4$ is also easily shown to be a non-alternating cocycle. On the other hand, we needed the construction via the spectral sequence to show that it defines a nontrivial class in $NH^4_m(G\curvearrowright G/P)$. We refer the reader to \cite{BucSav} for the details. Here we only exhibit the cocycles and show why the one in degree $4$ is non-alternating. Although we write the cocycles in the case of two factors in order to simplify the notation, the generalization to several factors is straightforward. 

For $G=\mathrm{Isom}^+(\mathbb{H}^n)$, we identify $\partial \mathbb{H}^n$ with  $\mathbb{R}^{n-1}\cup \{{\infty} \}$, choose $P=\mathrm{Stab}({\infty})$ and take as maximal abelian subgroup $A<P$ the group of homotheties of the upper half space model, namely $A:=\{a_\lambda:x\mapsto \lambda x\mid \lambda\in \mathbb{R}_{>0}\}$. For any distinct points $x_0,\dots,x_3\in \mathbb{R}^{n-1}\cup \{{\infty} \}=\partial \mathbb{H}^n$, we define their \emph{(positive) cross ratio}  by 
\begin{equation*}\label{eq:cross:ratio:real}
{b}(x_0,\cdots,x_3):=\frac{\|x_2 -x_0\| \| x_3 - x_1\|}{\|x_2-x_1\|\|x_3-x_0\|}\in \mathbb{R}_{>0}, 
\end{equation*}
where $\| \cdot \|$ denotes the Euclidean norm and with the usual convention that ${\infty}/{\infty}=1$. This is precisely the absolute value, respectively the modulus, of the classic cross ratio of 4 points on $\partial \mathbb{H}^2=P^1\mathbb{R}$ and $\partial \mathbb{H}^3=P^1\mathbb{C}$. It is clear that this cross ratio is  invariant under the isometry group of $\mathbb{H}^n$. 
 
Let now $G=\mathrm{Isom}^+(\mathbb{H}^n)\times \mathrm{Isom}^+(\mathbb{H}^m)$, where $n,m\geq 2$. Then a maximal split torus $A$ has rank $2$, with $w_0$ acting as $-1$ on its Lie algebra. As a consequence
$$H^k_m(A)^{w_0}=\left\{ \begin{array}{ll}
\mathbb{R}& \mathrm{if \ } k=0 \mathrm{ \ or \ } 2,\\
0&\mathrm{otherwise}
\end{array}\right. $$
and our Theorems \ref{thm negative complex} and \ref{thm positive complex} predict that
$$NH^k_m(G\curvearrowright G/P)=\left\{ \begin{array}{ll}
NH^k_{m,\mathrm{alt}}(G\curvearrowright G/P)\cong  \mathbb{R}& \mathrm{if \ } k=3,\\
NH^k_{m,\mathrm{n-alt}}(G\curvearrowright G/P)\cong \mathbb{R}& \mathrm{if \ } k=4,\\
0&\mathrm{otherwise}.
\end{array}\right. $$

\begin{exm}\cite[Theorem 2]{BucSav} A generator of $NH^3_{m,\mathrm{alt}}(G\curvearrowright G/P)$ is given by the $G$-invariant alternating cocycle
$$\begin{array}{rcl}
(\partial \mathbb{H}^n \times \partial \mathbb{H}^m)^4&\longrightarrow &\mathbb{R}\\
((x_0,y_0),\dots,(x_3,y_3))&\longmapsto & \mathrm{det}\left( \begin{array}{cc}
\mathrm{log}(b(x_0,x_1,x_2,x_3))&\mathrm{log}(b(x_1,x_2,x_3,x_0))\\
\mathrm{log}(b(y_0,y_1,y_2,y_3))&\mathrm{log}(b(y_1,y_2,y_3,y_0))
\end{array} \right) . \end{array}$$

The fact that this is an alternating cocycle is straightforward from the properties of the cross ratio. 
\end{exm}




\begin{exm}\cite[Theorem 4]{BucSav} A generator of $NH^4_{m,\mathrm{n-alt}}(G\curvearrowright G/P)$ is given by the $G$-invariant non-alternating cocycle
$$\begin{array}{rcl}
(\partial \mathbb{H}^n \times \partial \mathbb{H}^m)^5&\longrightarrow &\mathbb{R}\\
((x_0,y_0),\dots,(x_4,y_4))&\longmapsto & \mathrm{det}\left( \begin{array}{cc}
\mathrm{log}(b(x_0,x_1,x_2,x_3))&\mathrm{log}(b(x_1,x_2,x_3,x_4))\\
\mathrm{log}(b(y_0,y_1,y_2,y_3))&\mathrm{log}(b(y_1,y_2,y_3,y_4))
\end{array} \right) . \end{array}$$

Let us show that it is non-alternating. In order to do so, we will prove that the cocycle applied to the 5-tuple taken in reverse order, $((x_4,y_4),\dots,(x_0,y_0))$, changes by a factor $-1$, whereas the permutation $(0,4)(1,3)$ has sign $+1$. This easily implies that its alternation vanishes. We evaluate our cocycle on $((x_4,y_4),\dots,(x_0,y_0))$ to obtain
$$ \mathrm{det}\left( \begin{array}{cc}
\mathrm{log}(b(x_4,x_3,x_2,x_1))&\mathrm{log}(b(x_3,x_2,x_1,x_0))\\
\mathrm{log}(b(y_4,y_3,y_2,y_1))&\mathrm{log}(b(y_3,y_2,y_1,y_0))
\end{array} \right).$$
Since $b(z_1,z_2,z_3,z_4)=b(z_4,z_3,z_2,z_1)$, this determinant is equal to
$$ \mathrm{det}\left( \begin{array}{cc}
\mathrm{log}(b(x_1,x_2,x_3,x_4))&  \mathrm{log}(b(x_0,x_1,x_2,x_3))\\
\mathrm{log}(b(y_1,y_2,y_3,y_4))  &\mathrm{log}(b(y_0,y_1,y_2,y_3))
\end{array} \right) ,$$
which is the evaluation on the tuple $((x_0,y_0),\dots,(x_4,y_4))$ where the columns are exchanged, hence the factor $-1$.
\end{exm}



The discovery of this non-alternating cocycle in degree $4$ was the main motivation of the present paper. Further cocycles in the kernel are exhibited for $\mathrm{SL}(3,\mathbb{K})$ in \cite{BucSav} when $\mathbb{K}$ is $\mathbb{R}$ or $\mathbb{C}$. They are alternating in degree $2$ and non-alternating in degree $3$.

\section{The bicomplex of (non-)alternating functions on the Furstenberg boundary}\label{sec spectral sequence alternating}

Let $G$ be a connected semisimple Lie group with finite center. Let $P<G$ be a minimal parabolic subgroup and consider $A <P$ a maximal split torus. We denote by $w_0$ a representative of the longest element in the Weyl group associated to $A$. 

To establish Theorem \ref{thm monod} \cite[Theorem B]{Monod}, Monod considers the spectral sequences associated to the bicomplex 
$$
C^{p,q}:=L^0(G^{p+1},L^0((G/P)^q))^G,
$$
where the vertical differential 
$$
d^\uparrow:C^{p,q} \rightarrow C^{p+1,q}
$$
coincides with the usual homogeneous differential on $G$, whereas the horizontal differential 
$$
d^\rightarrow:C^{p,q} \rightarrow C^{p,q+1}
$$
is the homogeneous differential on $G/P$ multiplied by $(-1)^{p+1}$ to ensure that $d^\rightarrow d^\uparrow=d^\uparrow d^\rightarrow$. The decomposition
$$L^0((G/P)^q) \cong L^0_{\nalt}((G/P)^q) \oplus L^0_{\alt}((G/P)^q)$$
allows us to decompose the above bicomplex, where for $q=0$ we just set $L^0_{\nalt}((G/P)^0)=0$ and $L^0_{\alt }((G/P)^0)= \mathbb{R}$. Indeed, for a subscript  $\varepsilon \in \{\alt, \nalt\}$, set
$$C^{p,q}_\varepsilon:=L^0(G^{p+1},L^0_{\varepsilon}((G/P)^q))^G,
$$
so that
\begin{equation}\label{eq decomposition ss}
C^{p,q}=C^{p,q}_{\nalt} \oplus C^{p,q}_{\alt} .
\end{equation}
The vertical and the horizontal differentials clearly restrict to both $C^{p,q}_{\nalt}$ and $C^{p,q}_{\alt} $. We will thus consider the spectral sequences associated to these bicomplexes. Beside giving the refinement of Monod's Theorem \ref{thm monod}, it also slightly simplifies Monod's proof as the associated spectral sequences  are somehow easier to handle. For example, for the second spectral sequences $^{II}_\varepsilon E$ at most two columns remain non-trivial on the second page, whereas three columns remain non-trivial on the second page for the original bicomplex.

For a given $\varepsilon \in \{\alt,\nalt\}$, we naturally obtain two different spectral sequences. The first page of the first spectral sequence is obtained as follows
\begin{equation}\label{eq first page I}
^{I}_\varepsilon E_1^{p,q}:=(H^q(C^{p,q}_\varepsilon,d^\rightarrow),d_1=d^\uparrow). 
\end{equation}

\begin{prop}\label{prop first ss degenerates}
For $\varepsilon \in \{ \alt, \nalt \}$, the spectral sequence $^{I}_\varepsilon E$ degenerates immediately, i.e. $^{I}_\varepsilon E_1^{p,q}=0$ for every $p,q\geq 0$. 
\end{prop}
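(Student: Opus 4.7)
The plan is to prove the stronger statement that for each fixed $p \geq 0$, the entire horizontal complex $(C^{p,\bullet}, d^\rightarrow)$ has vanishing cohomology in every degree. The $\varepsilon$-refinement will then follow formally from the direct-sum decomposition \eqref{eq decomposition ss}, since alternation commutes with the coboundary and cohomology commutes with finite direct sums.

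Acyclicity of the horizontal complex will be obtained from an explicit contracting chain homotopy. The idea is to use the first argument $g_0 \in G$ as a source of a canonical point $g_0 P \in G/P$, which can be inserted as an extra $y$-variable. Concretely, for $q \geq 1$ I would set
$$s : C^{p,q} \longrightarrow C^{p,q-1}, \qquad (sf)(g_0,\ldots,g_p,y_0,\ldots,y_{q-2}) := f(g_0,\ldots,g_p,g_0 P,y_0,\ldots,y_{q-2}),$$
and $s=0$ on $C^{p,0}$. Since $g \cdot (g_0 P) = (gg_0)P$ is consistent with the diagonal $G$-action, this operator preserves $G$-invariance. A standard simplicial calculation, familiar from the bar resolution, then gives
$$s\,d^\rightarrow + d^\rightarrow s = (-1)^{p+1}\,\mathrm{id},$$
the overall sign being harmless and merely reflecting the convention $d^\rightarrow = (-1)^{p+1} d$. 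The degree-zero case is consistent: there $d^\rightarrow s = 0$ and $s\,d^\rightarrow$ acts as $(-1)^{p+1}\,\mathrm{id}$, since for $f \in C^{p,0}$ the differential $d^\rightarrow f$ is (up to sign) the constant extension of $f$ to the extra $y_0$-variable, so any cocycle must be zero.

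Putting these ingredients together,
$$0 \;=\; H^q(C^{p,\bullet}, d^\rightarrow) \;=\; {}^{I}_{\nalt}E_1^{p,q} \oplus {}^{I}_{\alt}E_1^{p,q},$$
and both summands vanish as claimed. The only mildly delicate point is that $s$ itself does \emph{not} respect the alternating/non-alternating splitting — inserting $g_0 P$ in position zero destroys $\mathrm{Sym}$-equivariance — which is why the argument is first carried out on the total horizontal complex and only afterwards restricted to the $\varepsilon$-parts by the direct-sum argument above. Beyond this bookkeeping, the proof is essentially Monod's argument for the degeneration of the first spectral sequence in the undecomposed bicomplex, which here becomes a touch cleaner.
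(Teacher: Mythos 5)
Your reduction to the total, undecomposed row complex is exactly the paper's argument: since $\mathrm{Alt}_q$ commutes with $d^\rightarrow$, the splitting $C^{p,q}=C^{p,q}_{\nalt}\oplus C^{p,q}_{\alt}$ passes to cohomology, so the vanishing of $H^q(C^{p,\bullet},d^\rightarrow)$ forces the vanishing of both $\varepsilon$-summands. The paper stops there and quotes that vanishing from \cite[Proposition 6.1]{Monod}. The gap lies in your attempt to reprove it via the explicit operator $s$: this operator is not defined on $L^0$-cochains. For almost every $(g_0,\dots,g_p)$ the value $f(g_0,\dots,g_p)$ is only an equivalence class in $L^0((G/P)^q)$, and the slice $\{g_0P\}\times (G/P)^{q-1}$ is a null subset of $(G/P)^q$ (the measure class on $G/P$ is non-atomic), so the expression $f(g_0,\dots,g_p)(g_0P,y_0,\dots,y_{q-2})$ depends on the choice of representative and is meaningless. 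This is precisely the obstruction the introduction invokes to explain why $\mathrm{Alt}$ is \emph{not} homotopic to the identity on $L^0((G/P)^{\bullet})$: any would-be homotopy that inserts a prescribed point --- even one varying measurably with $g_0$ --- amounts to restricting an $L^0$-class to a null set.

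A correct route (essentially that of \cite[Proposition 6.1]{Monod}) replaces the chain homotopy by a Fubini, or generic-slice, argument: first untwist the $G$-invariance using the essentially free action of $G$ on the $G^{p+1}$ factor, identifying the row complex with $L^0(G^{p}\times (G/P)^{\bullet})$ carrying the homogeneous differential in the $(G/P)$-variables and no invariance constraint; then, for a cocycle $f$, Fubini guarantees that for \emph{almost every} $y\in G/P$ the slice $s_yf:=f(\,\cdot\,;y,\,\cdot\,)$ is a well-defined element one degree down with $d^\rightarrow(s_yf)=\pm f$, and one chooses a single such generic $y$. No measurable assignment $g_0\mapsto g_0P$ can substitute for this generic choice. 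Your degree-zero computation and your (correct) remark that $s$ would not respect the $\alt/\nalt$ splitting are fine but do not repair the ill-definedness of $s$; either cite \cite[Proposition 6.1]{Monod} as the paper does, or carry out the generic-slice argument.
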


\begin{proof}
In virtue of the decomposition of Equation \ref{eq decomposition ss}, for every $p,q \geq 0$ we can write 
$$
H^q(C^{p,q},d^\rightarrow) \cong H^q(C_{\nalt}^{p,q},d^\rightarrow) \oplus H^q(C_{\alt}^{p,q},d^\rightarrow). 
$$
By \cite[Proposition 6.1]{Monod} we know that the left-hand side is trivial. 
As a consequence, we must have that
$$
H^q(C_{\alt}^{p,q},d^\rightarrow) \cong H^q(C_{\nalt}^{p,q},d^\rightarrow) \cong 0,
$$
and the statement is proved. 
\end{proof}

The fact that the spectral sequence $^I_\varepsilon  E_1^{p,q}$ degenerates (immediately) automatically implies that  the second spectral sequence associated to the bicomplex $C^{p,q}_\varepsilon$, namely
$$
^{II}_\varepsilon  E_1^{p,q}:=(H^p(C^{p,q}_\varepsilon,d^\uparrow),d_1=d^\rightarrow),
$$
also degenerates \cite[Proposition A.9]{Guichardet}, but not immediately. We will exploit this to establish the isomorphisms in Theorems \ref{thm negative complex} and  \ref{thm positive complex}. 

The $(p,q)$-th entry of the first page of this spectral sequence is given by 
$$
^{II}_\varepsilon  E_1^{p,q}:=H^p_m(G,L^0_\varepsilon((G/P)^q)).
$$
Recall that  we have a direct sum decomposition 
$$
L^0((G/P)^q) \cong L^0_{\nalt} ((G/P)^q) \oplus L^0_{\alt}((G/P)^q),
$$
which implies an analogous decomposition at the level of coefficients, namely
\begin{equation}\label{eq split exact cohomology}
H^p_m(G,L^0((G/P)^q)) \cong H^p_m(G,L^0_{\nalt}((G/P)^q)) \oplus H^p_m(G,L^0_{\alt}(G/P)^q)).
\end{equation}
Furthermore, alternation in the coefficients induces an alternation map
$$\mathrm{Alt}_q: H^p_m(G,L^0((G/P)^q)) \longrightarrow H^p_m(G,L^0((G/P)^q)) $$
and it is immediate to check that
\begin{equation}\label{ H alt Ker Im} H^p_m(G,L^0_{\nalt}((G/P)^q))\cong \mathrm{Ker}(\mathrm{Alt}_q)\quad \mathrm{and}\quad H^p_m(G,L^0_{\alt}((G/P)^q))\cong \mathrm{Im}(\mathrm{Alt}_q).\end{equation}

The next goal of our investigation is to establish an explicit isomorphism for the column $^{II}_\varepsilon E_1^{p,2}$, namely where we consider two points on the Furstenberg boundary $G/P$. Recall that there exists a natural action of $w_0$ on the measurable cohomology $H^p_m(A)$ induced by the adjoint action. In particular, we can consider the map 
$$
\Pi^p_{w_0}:H^p_m(A) \rightarrow H^p_m(A), \  \ \ \alpha \mapsto \alpha-\mathrm{Ad}(w_0)(\alpha). 
$$
We say that an element $\alpha \in H^p_m(A)$ is $w_0$-\emph{invariant} if it lies in the kernel of $\Pi^p_{w_0}$, whereas we say that $\alpha$ is $w_0$-\emph{equivariant} if it lies in the image of $\Pi^p_{w_0}$. We denote by 
$$
H^p_m(A)^{w_0}:=\Ker(\Pi^p_{w_0})
$$
the subspace of $w_0$-invariant classes. 

\begin{prop}\label{prop w0 invariant}
We have the following isomorphisms
\begin{eqnarray*}
H^p_m(G,L^0_{\nalt}((G/P)^2)) &\cong &\Ker(\Pi^p_{w_0}), \\
H^p_m(G,L^0_{\alt}((G/P)^2)) &\cong& \Img(\Pi^p_{w_0}) , 
\end{eqnarray*}
for every $p \geq 0$. 
\end{prop}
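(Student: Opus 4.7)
The plan is to identify the coordinate swap involution on $(G/P)^2$ with the $\mathrm{Ad}(w_0)$-action on $H^*_m(A)$ through Monod's Shapiro-type identification, and then deduce both isomorphisms from an eigenspace decomposition. First I would recall the orbit structure: the diagonal $G$-action on $(G/P)^2$ admits a unique orbit of full measure, the pairs of opposite minimal parabolics, $G$-equivariantly isomorphic to $G/L$ with $L=P\cap w_0 P w_0^{-1}=MA$ the common Levi and $(eP, w_0 P)$ a base point. Since the complement has measure zero, $L^0((G/P)^2)\cong L^0(G/L)$ as $G$-modules, and Shapiro's lemma in measurable cohomology (as used in Monod's proof of Theorem~\ref{thm monod}) yields
\[
H^p_m(G, L^0((G/P)^2)) \;\cong\; H^p_m(L, \mathbb{R}) \;\cong\; H^p_m(A),
\]
where the last isomorphism uses that $M$ is compact and contributes trivially.

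Next I would track the swap $\tau\colon(x,y)\mapsto(y,x)$ through this chain. Under the parametrization $gL\mapsto(gP, gw_0P)$, the swap intertwines with the right translation $gL\mapsto gw_0L$, which is well-defined since $w_0^2\in L$. Because $w_0\in N_G(L)$, this right translation corresponds, at the level of Shapiro's model for $H^p_m(L,\mathbb{R})$, to the pullback by the automorphism $l\mapsto w_0^{-1}lw_0$ of $L$; restricted to $A$ and lifted to $H^p_m(A)$, this is precisely the action of $\mathrm{Ad}(w_0)$ that appears in the definition of $\Pi^p_{w_0}$. This translation of the coordinate swap into the Weyl-group action on torus cohomology is the main technical point I anticipate, since it requires unpacking Shapiro's isomorphism on explicit cocycles to rule out spurious signs or twists.

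With $\tau^*$ identified with $\mathrm{Ad}(w_0)$, the proposition reduces to the eigenspace splitting of a linear involution on a real vector space. Indeed, in the two-point case $\mathrm{Alt}_2=\tfrac{1}{2}(\id-\tau^*)$, so $L^0_{\nalt}((G/P)^2)$ and $L^0_{\alt}((G/P)^2)$ are the $+1$- and $-1$-eigenspaces of $\tau^*$ respectively. Combining \eqref{ H alt Ker Im} with the isomorphism above, the non-alternating subspace transports to $\{\alpha\in H^p_m(A):\mathrm{Ad}(w_0)\alpha=\alpha\}=\Ker(\Pi^p_{w_0})$, while the alternating subspace transports to the $(-1)$-eigenspace; since for any linear involution $\sigma$ on a real vector space the image $\Img(\id-\sigma)$ equals the $(-1)$-eigenspace of $\sigma$, this coincides with $\Img(\Pi^p_{w_0})$, completing both isomorphisms.
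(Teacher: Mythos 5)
Your proposal is correct and follows essentially the same route as the paper: identify the full-measure orbit $(G/P)^2\supset G/MA$ via $gMA\mapsto(gP,gw_0P)$, pass through Eckmann--Shapiro and the restriction isomorphism $H^p_m(MA)\cong H^p_m(A)$, and show that the coordinate swap corresponds to the $\mathrm{Ad}(w_0)$-action (equivalently, that $2\mathrm{Alt}_2$ intertwines with $\Pi^p_{w_0}$), which is exactly the explicit cochain computation the paper carries out, including the use of $w_0^2\in M$ and the homotopy between right translation by $w_0$ and the identity. The only cosmetic difference is that you phrase the conclusion as an eigenspace decomposition of the involution rather than as kernels and images of the intertwined operators, which is equivalent.
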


\begin{proof}
Let $M$ be the centralizer of the Lie algebra associated to $A$ in the maximal compact subgroup of $G$. There exists a $G$-orbit of full measure in $(G/P)^2$ which is isomorphic to the quotient $G/MA$ via the map $gMA \mapsto (gP,gw_0P)$. For every $p \geq 0$, the latter identification allows us to write the following isomorphism
$$
H^p_m(G,L^0((G/P)^2)) \cong H^p_m(G,L^0(G/MA)),
$$
which actually holds at the level of cochains
\begin{equation}\label{eq modules two points}
L^0(G^{p+1},L^0(G/P)^2)^G \cong L^0(G^{p+1},L^0(G/MA))^G. 
\end{equation}

We remind the reader that the measurable cohomology $H^p_m(L)$ of a closed subgroup $L<G$ can be computed using the cocomplex $(L^0(G^{\ast+1})^L,d^\ast)$. In a similar way, if $L$ and $T$ are closed subgroups of $G$ such that $L<T$, we implement the \emph{cohomological restriction} via the inclusion of invariants
$$
L^0(G^{p+1})^{T} \rightarrow L^0(G^{p+1})^{L}.
$$

Thanks to the previous observation, for every $p \geq 0$, we can write the map inducing the Eckmann-Shapiro isomorphism \cite[Theorem 6]{Moore} as follows 
$$
\Psi^p:L^0(G^{p+1})^{MA} \rightarrow L^0(G^{p+1},L^0(G/MA))^G, 
$$
$$
 \Psi^p(f)(g_0,\ldots,g_p)(hMA):=f(h^{-1}g_0,\ldots,h^{-1}g_p). 
$$
At a cohomological level we obtain a chain of isomorphisms
$$
H^p_m(G,L^0(G/MA)) \cong H^p_m(MA) \cong H^p_m(A),
$$
where the last one is precisely the restriction map. The latter is an isomorphism because $M$ is compact and centralizes $A$ \cite[Theorem 9.1]{Blanc}. We deduce that 
\begin{equation}\label{eq MA isom GP2}
H^p_m(MA) \cong H^p_m(G,L^0((G/P)^2)) 
\end{equation}
and the above isomorphism is obtained by composing the map $\Psi^p$ and the identification of Equation \eqref{eq modules two points}, namely
$$
\Phi^p:L^0(G^{p+1})^{MA} \rightarrow L^0(G^{p+1},L^0((G/P)^2))^G, 
$$
$$
\Phi^p(f)(g_0,\ldots,g_p)(hP,hw_0P)=f(h^{-1}g_0,\ldots,h^{-1}g_p). 
$$
The main point in the proof is to check that the above map intertwines the operator $\Pi^p_{w_0}$ and the operator $2\mathrm{Alt}_2$  acting on the coefficients. More precisely, we claim that the diagram
\begin{equation}\label{eq intertwine projections}
\xymatrix{
L^0(G^{p+1})^{MA} \ar[rr]^{\hspace{-15pt}\Phi^p} \ar[d]^{\Pi^p_{w_0}} && L^0(G^{p+1},L^0((G/P)^2)) \ar[d]^{2\mathrm{Alt}_2} \\
L^0(G^{p+1})^{MA} \ar[rr]^{\hspace{-15pt}\Phi^p} &&  L^0(G^{p+1},L^0((G/P)^2))
}
\end{equation}
induces a commutative diagram in cohomology. Let $f \in L^0(G^{p+1})^{MA}$. The operator $\Pi^p_{w_0}$ acts on the space $L^0(G^{p+1})^A$ as follows
$$
(\Pi^p_{w_0}f)(g_0,\ldots,g_p)=f(g_0,\ldots,g_p)-f(w_0^{-1}g_0w_0,\ldots,w_0^{-1}g_0w_0).
$$
By \cite[Chapter I, Section 7]{Guichardet}, the right multiplication by $w_0$ is homotopic to the identity. Thus, up to homotopy, we can suppose that the action of the operator $\Pi^p_{w_0}$ is actually given by 
$$
(\Pi^p_{w_0}f)(g_0,\ldots,g_p)=f(g_0,\ldots,g_p)-f(w_0^{-1}g_0,\ldots,w_0^{-1}g_0).
$$
If we now apply the map $\Phi^p$ we obtain
\begin{align*}
&(\Phi^p \circ \Pi^p_{w_0})(f)(g_0,\ldots,g_p)(hP,hw_0P)\\
=&(\Pi^p_{w_0}f)(h^{-1}g_0,\ldots,h^{-1}g_p)\\
=&f(h^{-1}g_0,\ldots,h^{-1}g_p)-f(w_0^{-1}h^{-1}g_0,\ldots,w_0^{-1}h^{-1}g_p).
\end{align*}
In a similar way we can consider the other composition 
\begin{align*}
&(2\mathrm{Alt}_2 \circ \Phi^p)(f)(g_0,\ldots,g_p)(hP,hw_0P)\\
=&\Phi^p(f)(g_0,\ldots,g_p)(hP,hw_0P)-\Phi^p(f)(g_0,\ldots,g_p)(hw_0P,hP)\\
=&f(h^{-1}g_0,\ldots,h^{-1}g_p)-f(w_0^{-1}h^{-1}g_0,\ldots,w_0^{-1}h^{-1}g_p),
\end{align*}
where we exploited the fact that $w_0^2$ is an element of $M<P$ to move from the second line to the third one. The diagram \eqref{eq intertwine projections} thus induces the following commutative diagram
$$
\xymatrix{
H^p_m(A) \ar[rr]^{\hspace{-25pt}\cong} \ar[d]^{\Pi^p_{w_0}} && H^p_m(G,L^0((G/P)^2)) \ar[d]^{2\mathrm{Alt}_2} \\
H^p_m(A) \ar[rr]^{\hspace{-25pt}\cong} && H^p_m(G,L^0((G/P)^2)),
}
$$
where, with an abuse of notation, we used the same letters for the maps induced at cohomological level and we exploited the restriction isomorphism to pass from $MA$ to $A$. In particular, the Eckmann-Shapiro induction determines the following isomorphisms
$$
\Ker(\Pi^p_{w_0}) \cong \Ker(2\mathrm{Alt}_2)\cong \Ker(\mathrm{Alt}_2), \ \ \ \Img(\Pi^p_{w_0}) \cong \Img(2\mathrm{Alt}_2) \cong \Img(\mathrm{Alt}_2),
$$
which in view of the isomorphisms in (\ref{ H alt Ker Im}) finishes the proof of the proposition.  \end{proof}

We conclude the section with a vanishing result for the columns of the page $^{II}_\varepsilon E_1^{p,q}$ when $q \geq 3$. 

\begin{prop}\label{prop zero columns}
Let $\varepsilon \in \{\alt, \nalt\}$. Then the measurable cohomology 
$$
H^p_m(G,L^0_\varepsilon((G/P)^q)) \cong 0
$$
vanishes when $p \geq 1$ and $q \geq 3$. 
\end{prop}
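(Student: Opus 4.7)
The plan is to reduce the claim to the analogous vanishing for the undecomposed coefficient module $L^0((G/P)^q)$ that already appears in Monod's analysis, by exploiting that the alternation decomposition respects the $G$-module structure.

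First, I would observe that because the diagonal $G$-action on $(G/P)^q$ commutes with the coordinate-permutation action of $\mathrm{Sym}(q)$, the alternation operator $\mathrm{Alt}_q$ is a $G$-equivariant idempotent on $L^0((G/P)^q)$. Consequently both $L^0_{\nalt}((G/P)^q)$ and $L^0_{\alt}((G/P)^q)$ are $G$-submodules, and the canonical splitting is one of $G$-modules. Since $H^p_m(G,-)$ is additive on direct sums of coefficient modules, one obtains
$$H^p_m(G, L^0((G/P)^q)) \cong H^p_m(G, L^0_{\nalt}((G/P)^q)) \oplus H^p_m(G, L^0_{\alt}((G/P)^q)).$$

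Second, I would invoke Monod's analysis, which shows that for $q \geq 3$ there is a $G$-orbit of full measure in $(G/P)^q$ of the form $G/H_q$ with $H_q$ compact (sitting inside the centralizer $M$ of $A$ in a maximal compact subgroup of $G$). Combining the Eckmann--Shapiro isomorphism $H^p_m(G, L^0(G/H_q)) \cong H^p_m(H_q, \mathbb{R})$ with the vanishing of positive-degree measurable cohomology of compact groups with real coefficients yields $H^p_m(G, L^0((G/P)^q)) = 0$ for every $p \geq 1$. Together with the decomposition above, each summand is then a summand of the zero group, hence vanishes, which is precisely the claim.

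I do not foresee any substantive obstacle. The entire argument reduces to Monod's already established vanishing together with the elementary observation that alternation is $G$-equivariant; no re-analysis of the generic $G$-orbit structure on $(G/P)^q$ is required from our side, since the orbit identification and the Eckmann--Shapiro induction are both $G$-equivariant and therefore automatically compatible with the alt/nalt splitting of coefficients.
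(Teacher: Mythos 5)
Your proposal is correct and follows essentially the same route as the paper: the paper likewise notes that the alt/n-alt splitting of $L^0((G/P)^q)$ is a splitting of $G$-modules, hence induces a direct sum decomposition of $H^p_m(G,L^0((G/P)^q))$, and then cites Monod's vanishing of the total cohomology for $p\geq 1$, $q\geq 3$ to kill both summands. Your extra sketch of why Monod's vanishing holds (generic compact stabilizer plus Eckmann--Shapiro) is not needed, since the paper simply invokes \cite[Proposition 5.1]{Monod} as a black box.
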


\begin{proof}
By \cite[Proposition 5.1]{Monod} we know that
$$
H^p_m(G,L^0((G/P)^q)) \cong 0
$$
for every $p \geq 1$ and every $q \geq 3$. 
Recall from Equation \eqref{eq split exact cohomology} that
$$
H^p_m(G,L^0((G/P)^q)) \cong H^p_m(G,L^0_{\nalt}((G/P)^q)) \oplus H^p_m(G,L^0_{\alt}(G/P)^q)). 
$$
The vanishing of the left-hand side implies the statement and this concludes the proof. 
\end{proof}

\section{Proof of Theorems \ref{thm negative complex} and  \ref{thm positive complex}}

Let $G$ be a connected semisimple Lie group with finite center. Let $P$ be a minimal parabolic subgroup and fix $A<P$ a maximal split torus. We denote by $w_0$ a representative of the longest element in Weyl group. 

Before starting with the proofs of our theorems, we want to recall the explicit computation of some restriction maps in cohomology. By Monod \cite[Proposition 3.1]{Monod}, the restriction map 
\begin{equation}\label{eq res P A}
H^p_m(P) \rightarrow H^p_m(A)  
\end{equation}
is an isomorphism in all degrees. As a consequence, Monod recovers in \cite[Corollary 3.2]{Monod} a result by Wienhard \cite[Corollary 3]{Wienhard} saying that the restriction map 
\begin{equation} \label{eq res G P}
H^p_m(G) \rightarrow H^p_m(P)
\end{equation}
is trivial in all positive degrees. 

\begin{figure}[!h]
\centering
\begin{tikzpicture}
  \matrix (m) [matrix of math nodes,
             nodes in empty cells,
             nodes={minimum width=9ex,
                    minimum height=9ex,
                    outer sep=-3pt},
             column sep=2ex, row sep=-1ex,
             text centered,anchor=center]{
         p      &         &          &          & \\
          \cdots & \cdots & \cdots & \cdots   & \cdots \\
          3    & \ 0 \  & \ 0 \ &\  H^3_m(A)^{w_0} \ & 0 & \ \cdots & \\
          2    &  \ 0 \ &\  0 \  & \  H^2_m(A)^{w_0} \ & 0 & \  \cdots &  \\
          1    & \ 0 \ & \ 0 \ & \  H^1_m(A)^{w_0} \  & 0 & \   \cdots & \\
          0     & \ 0\   &\  0 \ & \  \textup{L}^0_{\nalt}((G/P)^2)^G  \ & \  \textup{L}^0_{\nalt}((G/P)^3)^G\  &  \ \cdots & \\
    \quad\strut &   0  &  1  &  2  &  3  &  \cdots & q \strut \\};


\draw[->](m-3-4.east) -- (m-3-5.west)node[midway,above ]{$d^\rightarrow$};

\draw[->](m-4-4.east) -- (m-4-5.west)node[midway,above ]{$d^\rightarrow$};

\draw[->](m-5-4.east) -- (m-5-5.west)node[midway,above ]{$d^\rightarrow$};

\draw[->](m-6-4.east) -- (m-6-5.west)node[midway,above ]{$\delta$};

\draw[thick] (m-1-1.east) -- (m-7-1.east) ;
\draw[thick] (m-7-1.north) -- (m-7-7.north) ;
\end{tikzpicture}
\caption{The first page $^{II}_{\nalt}E_1$}\label{fig first page negative}
\end{figure}

\begin{figure}[!h]
\centering
\begin{tikzpicture}
  \matrix (m) [matrix of math nodes,
             nodes in empty cells,
             nodes={minimum width=9ex,
                    minimum height=9ex,
                    outer sep=-3pt},
             column sep=2ex, row sep=-1ex,
             text centered,anchor=center]{
         p      &         &          &          & \\
          \cdots & \cdots & \cdots & \cdots   & \cdots \\
          3    & \ 0 \  & \ 0 \ &\  H^3_m(A)^{w_0} \ & 0 & \ \cdots & \\
          2    &  \ 0 \ &\  0 \  & \  H^2_m(A)^{w_0} \ & 0 & \  \cdots &  \\
          1    & \ 0 \ & \ 0 \ & \  H^1_m(A)^{w_0} \  & 0 & \   \cdots & \\
          0     & \ 0\   &\  0 \ & \  H^1_{m,\nalt}(G \curvearrowright G/P)  \ & \  H^2_{m,\nalt}(G \curvearrowright G/P) \  &  \ \cdots & \\
    \quad\strut &   0  &  1  &  2  &  3  &  \cdots & q \strut \\};


\draw[->](m-3-4) -- (m-4-6);

\draw[->](m-4-4) -- (m-5-6);

\draw[->](m-5-4) -- (m-6-6);

\draw[thick] (m-1-1.east) -- (m-7-1.east) ;
\draw[thick] (m-7-1.north) -- (m-7-7.north) ;
\end{tikzpicture}
\caption{The second page $^{II}_{\nalt}E_2$} \label{fig second page negative}
\end{figure}

\begin{proof}[Proof of Theorem \ref{thm negative complex}] We start by looking at the first page of the spectral sequence $^{II}_{\nalt}E_1^{p,q}$. We first observe that the first two columns are zero. The row corresponding to $p=0$ is given by the cocomplex of $G$-invariants non-alternating cochains on $G/P$. When $p \geq 1$, by Proposition \ref{prop zero columns} the only non-trivial column corresponds to $q=2$ and by Proposition \ref{prop w0 invariant} it is isomorphic to the $w_0$-invariant cohomology of $A$, namely $^{II}_{\nalt}E_1^{p,2} \cong H^p_m(A)^{w_0}$. The first page is depicted in Figure \ref{fig first page negative}.

The second page $^{II}_{\nalt}E_2^{p,q}$ is now immediately obtained: Only the row $p=0$ is affected by the differential, and its cohomology is by definition the cohomology  $H^{q-1}_{m,\nalt}(G \curvearrowright G/P)$. For $p \geq 1$, the rows remain unchanged. We refer to Figure \ref{fig second page negative} for a picture of the second page $^{II}_{\nalt}E_2$. 

By Proposition \ref{prop first ss degenerates} we know that the spectral sequence $^I_{\nalt}E_1^{p,q}$ degenerates immediately. By \cite[Proposition A.9]{Guichardet} we must have that $^{II}_{\nalt}E_2^{p,q}$ converges to zero as well. If we denote by $d_p$ the differential of the $p$-th page, the only way that $^{II}_{\nalt}E_2$ converges to zero is when the following conditions are satisfied:
\begin{itemize}
\item the cohomology $H^p_{m,\nalt}(G \curvearrowright G/P)$ is trivial for $p=0,1,2$;
\item the differential $d_p$ determines an isomorphism between $H^{p-1}_m(A)^{w_0}$ and the cohomology $H^{p+1}_{m,\nalt}(G \curvearrowright G/P)$, when $p \geq 2$. 
\end{itemize}
This concludes the proof. 
 \end{proof}

\begin{proof}[Proof of Theorem \ref{thm positive complex}] We describe the first page of the spectral sequence $^{II}_{\alt}E_1$. The row $p=0$ is given by the cocomplex of $G$-invariants alternating cochains on $G/P$, $^{II}_{\alt}E_1^{0,q}=L^0_{\alt}((G/P)^{q})^G$. When $p \geq 1$ and $q \geq 3$, the entry $^{II}_{\alt}E_1^{p,q} \cong 0$ is trivial by Proposition \ref{prop zero columns}. We are left with the columns corresponding to $q=0,1,2$ and $p \geq 1$. When $q=0$, we get back the usual measurable cohomology of $G$, that is $^{II}_{\alt}E_1^{p,0}\cong H_m^p(G)$. For $q=1$, the Eckmann-Shapiro induction gives us back the cohomology of the minimal parabolic subgroup $P$, namely $^{II}_{\alt}E_1^{p,1} \cong H^p_m(G,L^0(G/P)) \cong H^p_m(P)$. By Equation \eqref{eq res P A} the latter is isomorphic to the measurable cohomology of the maximal split torus, that is $^{II}_{\alt}E_1^{p,1} \cong H^p_m(A)$. Finally, for $q=2$, Proposition \ref{prop w0 invariant} implies that $^{II}_{\alt}E_1^{p,q}\cong H^p_m(G,L^0_{\alt}((G/P)^2))\cong \Pi^p_{w_0}(H^p_m(A))$ is isomorphic to the $w_0$-equivariant cohomology of $A$.

\begin{figure}[!h]
\centering
\begin{tikzpicture}
  \matrix (m) [matrix of math nodes,
             nodes in empty cells,
             nodes={minimum width=9ex,
                    minimum height=9ex,
                    outer sep=-3pt},
             column sep=2ex, row sep=-1ex,
             text centered,anchor=center]{
         p      &         &          &          & \\
          \cdots & \cdots & \cdots & \cdots   & \cdots \\
          3    & \ H^3_m(G) \  & \ H^3_m(A) \ &\  \Pi^3_{w_0}(H^3_m(A)) \ & 0 & \ \cdots & \\
          2    &  \ H^2_m(G) \ &\  H^2_m(A) \  & \  \Pi^2_{w_0}(H^2_m(A)) \ & 0 & \  \cdots &  \\
          1    & \ H^1_m(G) \ & \ H^1_m(A) \ & \  \Pi^1_{w_0}(H^1_m(A)) \  & 0 & \   \cdots & \\
          0     & \ \mathbb{R}\   &\  \textup{L}^0(G/P)^G \ & \  \textup{L}^0_{\alt}((G/P)^2)^G  \ & \  \textup{L}^0_{\alt}((G/P)^3)^G\  &  \ \cdots & \\
    \quad\strut &   0  &  1  &  2  &  3  &  \cdots & q \strut \\};


\draw[->](m-3-2.east) -- (m-3-3.west)node[midway,above ]{$0$};
\draw[->](m-3-3.east) -- (m-3-4.west)node[midway,above ]{$\Pi^3_{w_0}$};
\draw[->](m-3-4.east) -- (m-3-5.west);

\draw[->](m-4-2.east) -- (m-4-3.west)node[midway,above ]{$0$};
\draw[->](m-4-3.east) -- (m-4-4.west)node[midway,above ]{$\Pi^2_{w_0}$};
\draw[->](m-4-4.east) -- (m-4-5.west);

\draw[->](m-5-2.east) -- (m-5-3.west)node[midway,above ]{$0$};
\draw[->](m-5-3.east) -- (m-5-4.west)node[midway,above ]{$\Pi^1_{w_0}$};
\draw[->](m-5-4.east) -- (m-5-5.west);

\draw[->](m-6-2.east) -- (m-6-3.west)node[midway,above ]{$\delta$};
\draw[->](m-6-3.east) -- (m-6-4.west)node[midway,above ]{$\delta$};
\draw[->](m-6-4.east) -- (m-6-5.west)node[midway,above ]{$\delta$};

\draw[thick] (m-1-1.east) -- (m-7-1.east) ;
\draw[thick] (m-7-1.north) -- (m-7-7.north) ;
\end{tikzpicture}
\caption{The first page $^{II}_{\alt}E_1$}\label{fig first page positive}
\end{figure}

If we now look at the differentials, on the first row $p=0$ we simply get the usual homogeneous differential. If $p \geq 1$, it is straightforward to check that the differential 
$$d_1:^{II}_{\alt}E_1^{p,0}=H^p_m(G)\longrightarrow H^p_m(P)=^{II}_{\alt}E_1^{p,1}$$
induced by $d^\rightarrow$ is, as a map $H^p_m(G)\rightarrow H^p_m(P)$, simply $(-1)^{p+1}$ times the restriction map from $G$ to $P$. By Equation \eqref{eq res G P} we know that such a map is trivial in all degrees. As a consequence, the differentials from the column $q=0$ to the column $q=1$ vanish for $p>0$. In a similar way, the differentials from the column $q=1$ to the column $q=2$ are conjugated, up to sign, to the operators $\Pi^p_{w_0}$. More precisely, Monod \cite[Proposition 4.1]{Monod} shows that the differential can be computed at the level of cochains by 
$$
d^\rightarrow: L^0(G^{p+1})^P \rightarrow L^0(G^{p+1})^A,
$$
$$
d^\rightarrow f(g_0,\ldots,g_p)=(-1)^{p+1}\left[f(w_0^{-1}g_0,\ldots,w_0^{-1}g_p)-f(g_0,\ldots,g_p)\right]. 
$$
Since the right multiplication by $w_0$ is actually homotopic to the identity, the differential $d^\rightarrow$ is homotopic to the map $$f \mapsto (-1)^{p+1}\left[f(w_0^{-1}g_0w_0,\ldots,w_0^{-1}g_pw_0)-f(g_0,\ldots,g_p)\right],$$ which amounts to $(-1)^{p+2} \Pi^p_{w_0} \circ \mathrm{res}$ and the claim follows. 

The second page $^{II}_{\alt}E_2$ is now easy to compute. Since $H^0_m(G \curvearrowright G/P) \cong \mathbb{R}$, the inclusion $$ ^{II}E^{0,0}_1 \cong \mathbb{R} \longrightarrow L^0(G/P)^G \cong ^{II}E^{0,1}_1$$ forces the vanishing of the terms $^{II}_{\alt}E^{0,0}_2 \cong ^{II}_{\alt}E^{0,1}_2\cong 0$. When $q \geq 2$, it is clear that on the first row $p=0$ we get back $^{II}_{\alt}E^{0,q}_2 \cong H^{q-1}_{m,\alt}(G \curvearrowright G/P)$. For $p \geq 1$, on the first column we still have the cohomology of $G$. On the first page, the differentials between the first column and the second one were all surjective, thus $^{II}_{\alt}E^{p,1}_2 \cong H^p_m(A)^{w_0}$ and $^{II}_{\alt}E^{p,2}_2 \cong 0$.
We depict the second page $^{II}_{\alt}E_2$ in Figure \ref{fig second page positive}.

\begin{figure}[!h]
\centering
\begin{tikzpicture}
  \matrix (m) [matrix of math nodes,
             nodes in empty cells,
             nodes={minimum width=9ex,
                    minimum height=9ex,
                    outer sep=-3pt},
             column sep=2ex, row sep=-1ex,
             text centered,anchor=center]{
         p      &         &          &          & \\
          \cdots & \cdots & \cdots & \cdots   & \cdots \\
          3    & \ H^3_m(G) \  & \ H^3_m(A)^{w_0} \ &\  0 \ & 0 & \ \cdots & \\
          2    &  \ H^2_m(G) \ &\  H^2_m(A)^{w_0} \  & \  0 \ & 0 & \  \cdots &  \\
          1    & \ H^1_m(G) \ & \ H^1_m(A)^{w_0} \ & \  0 \  & 0 & \   \cdots & \\
          0     & \ 0\   &\  0 \ & \  H^1_{m,\alt}(G \curvearrowright G/P)  \ & \  H^2_{m,\alt}(G \curvearrowright G/P) \  &  \ \cdots & \\
    \quad\strut &   0  &  1  &  2  &  3  &  \cdots & q \strut \\};


\draw[->](m-3-2) -- (m-4-4);
\draw[->](m-3-3) -- (m-4-5);

\draw[->](m-4-2) -- (m-5-4);
\draw[->](m-4-3) -- (m-5-5);

\draw[->](m-5-2) -- (m-6-4);
\draw[->](m-5-3) -- (m-6-5);

\draw[thick] (m-1-1.east) -- (m-7-1.east) ;
\draw[thick] (m-7-1.north) -- (m-7-7.north) ;
\end{tikzpicture}
\caption{The second page $^{II}_{\alt}E_2$} \label{fig second page positive}
\end{figure}

Again by Proposition \ref{prop first ss degenerates} we know that the spectral sequence $^I_{\alt}E_1^{p,q}$ degenerates immediately. By \cite[Proposition A.9]{Guichardet} the spectral sequence $^{II}_{\alt}E_2^{p,q}$ is forced to converge to zero. If we denote by $d_p$ the differential of the $p$-th page, the only way that $^{II}_{\alt}E_2$ converges to zero is when the following conditions are satisfied:
\begin{itemize}
\item we have an isomorphism $H^0_{m,\alt}(G \curvearrowright G/P) \cong \mathbb{R}$;
\item we have an isomorphism $H^1_{m,\alt}(G \curvearrowright G/P) \cong H^1_m(G)(=0)$;
\item the differential $d_p$ sends $H^{p-1}_m(A)^{w_0}$  injectively into $H^p_{m,\alt}(G \curvearrowright G/P)$;
\item the differential $d_{p+1}$ realizes an isomorphism between $H^p_m(G)$ and the quotient $H^p_{m,\alt}(G \curvearrowright G/P)/d_p(H^{p-1}_m(A)^{w_0})$.
\end{itemize}
This proves the statement and concludes the proof. \end{proof}

\bibliographystyle{alpha}

\begin{thebibliography}{20} 

\bibitem{AM} Austin T. and Moore C. C. 
\emph{Continuity properties of measurable group cohomology}, Math. Ann. {\bf 356} (2013), n. 3, 885--937

\bibitem{Blanc} Blanc P. 
\emph{Sur la cohomologie continue des groupes localement compacts}, Ann. Sci. \'Ecole Norm. Sup.{\bf 12} (1979), no.2,137--168.


\bibitem{BouVI} Bourbaki N., \emph{Lie groups and Lie algebras - Chapters 4-6}, Springer Berlin, Heidelberg (1968). 

\bibitem{BucSav} Bucher M, Savini A., \emph{Some explicit cocycles on the Furstenberg boundary for products of isometries of hyperbolic spaces and $\mathrm{SL}(3,\mathbb{K})$}, preprint




\bibitem{Guichardet}  Guichardet A., \emph{Cohomologie des groupes topologiques et des alg\'ebres de Lie}, Textes
Math\'ematiques, vol. 2, CEDIC, Paris, 1980.

\bibitem{Monod} Monod N., \emph{The cohomology of semi-simple Lie groups, viewed from infinity}, Trans. Amer. Math. Soc. B {\bf 9} (2022), 144--159 


\bibitem{Moore} Moore C. C., \emph{Groups extensions and cohomology for locally compact groups, III}, Trans. Amer. Math. Soc. {\bf 221}
(1976), n. 1, 1--33


\bibitem{Wienhard} Wienhard A.,
\emph{Remarks on and around bounded differential forms}, Pure Appl. Math. Q. {\bf 8} (2012), n. 2, 479--496






\end{thebibliography}

\end{document}